\DeclareMathOperator*{\argmax}{arg\,max}
\newcommand{\R}{\mathbb{R}}
\newcommand{\Jcal}{\mathcal{J}}
\newcommand{\T}{^{\top}}
\newcommand{\invT}{^{-\top}}
\newcommand{\norm}[1]{\left\lVert#1\right\rVert}
\newcommand{\inner}[2]{\left\langle #1, #2 \right\rangle}
\newcommand{\sigmin}{\sigma_{\min}}
\newtheorem{theorem}{Theorem}
\newtheorem{assumption}{Assumption}
\newtheorem{lemma}{Lemma}
\title{\LARGE \bf
End-to-End Training of High-Dimensional Optimal Control with Implicit Hamiltonians via Jacobian-Free Backpropagation}
\author{%
    Eric Gelphman*
    \and
    Deepanshu Verma*\thanks{* denotes co-first author. Corresponding author: \url{eric_gelphman@mines.edu}}
    \and
    Nicole Tianjiao Yang 
    \and
    Stanley Osher 
    \and 
    Samy Wu Fung
}
\begin{document}

\maketitle
\begin{abstract}
    Neural network approaches that parameterize value functions have succeeded in approximating high-dimensional optimal feedback controllers when the Hamiltonian admits explicit formulas. 
    However, many practical problems, such as the space shuttle reentry problem and bicycle dynamics, among others, may involve implicit Hamiltonians that do not admit explicit formulas, limiting the applicability of existing methods. 
    Rather than directly parameterizing controls, which does not leverage the Hamiltonian's underlying structure, we propose an end-to-end implicit deep learning approach that directly parameterizes the value function to learn optimal control laws. Our method enforces physical principles by ensuring trained networks adhere to the control laws by exploiting the fundamental relationship between the optimal control and the value function's gradient; this is a direct consequence of the connection between Pontryagin's Maximum Principle and dynamic programming. Using Jacobian-Free Backpropagation (JFB), we achieve efficient training despite temporal coupling in trajectory optimization. We show that JFB produces descent directions for the optimal control objective and experimentally demonstrate that our approach effectively learns high-dimensional feedback controllers across multiple scenarios involving implicit Hamiltonians, which existing methods cannot address.
\end{abstract}

\thispagestyle{empty}
\pagestyle{empty}




%

    \section{Introduction}\label{sec: intro}
    We consider the problem of generating feedback controllers for high-dimensional optimal control problems of the form 
    \begin{equation}
        \begin{split}
        \min_{u \in U} \int_0^T L(s, z_{x}, u) ds + G(z_{x}(T)), \\ \text{subject to} \quad \dot{z}_{x} = f(t, z_{x},u), \;\; z_{x}(0) = x,
        \end{split}
        \label{eq:originalOC}
    \end{equation}
    where $z_x\in \R^n$   is the state trajectory with dynamics $f$ and initial condition $x$, $u(t)\in U\subset \R^m$ is the control, $L$ is the running cost, and $G$ is the terminal cost. The subscript in  $z_{x}$ denotes the dependence of the state on the initial condition $x$. 
    The Pontryagin Maximum Principle (PMP)~\cite{pontryagin2018mathematical, kopp1962pontryagin} provides necessary conditions for optimality through the generalized Hamiltonian
    \begin{equation}
        \mathcal{H}(t,z_{x},p_{x}, u) = - \langle p_{x}, f(t,z_{x},u) \rangle - L(t,z_{x},u),
    \end{equation}
    where $p_{x}$ is the adjoint variable. 
    At the optimal controller $u^\star$, we have 
    \begin{align}
        \dot{z}_{x} &= -\nabla_p \mathcal{H}(t,z_{x}, p_{x} , u^\star), \quad z_{x}(0) = x \\
        \dot{p}_{x} &= \nabla_z \mathcal{H}(t,z_{x}, p_{x}, u^\star), \quad p_{x}(T) = \nabla G(z_{x}(T)), \label{eq:adjoint_dyn}
    \end{align}
    along with the optimality condition
    \begin{equation}
        u^\star \in  \argmax_u \mathcal{H}(t,z_{x},p_{x}, u) \implies \nabla_u \mathcal{H}(t,z_{x}, p_{x}, u) = 0.
        \label{eq:u_star_def_general}
    \end{equation}
    
    A powerful strategy for solving this system involves connecting the PMP to the Hamilton-Jacobi-Bellman (HJB) equation, where the adjoint is linked to the gradient of the value function. Neural networks that parameterize the value function have demonstrated success in solving high-dimensional optimal control problems~\cite{onken2022neural, onken2021neural, lin2021alternating}, particularly when the Hamiltonian
    \begin{equation}
        H(t,z_{x},p_{x}) = \sup_u \mathcal{H}(t,z_{x},u,p_{x}),
        \label{eq:straight_H}
    \end{equation}
    and the corresponding optimal controllers $u^\star$ in~\eqref{eq:u_star_def_general} admit closed-form solutions~\cite{onken2022neural, onken2021neural, nakamura2021adaptive, zhao2024offline,Verma-HJB-RL}. This value function parameterization approach is particularly attractive because it leverages the PMP to extract additional structural information (see section~\ref{sec:OC_back}), making the training process significantly more efficient compared to  directly parameterizing the controller~\cite{onken2022neural, onken2021neural, li2024neural}.
    However, the requirement of a closed-form solution for the maximization in \eqref{eq:u_star_def_general} forms a critical bottleneck. Many problems of practical interest lack this convenient closed-form property. The Hamiltonian often fails to admit closed-form solutions in applications such as the space shuttle reentry problem~\cite[Example 4.1]{betts2010practical}, 
    and bicycle model control~\cite{xiao2023safe}, among numerous others~\cite{betts2010practical, dickmanns1972maximum, ge2007neural, ge1999adaptive, ge2003neural, ge2002adaptive, wang2006iss, calise2001adaptive}. This limitation is not restricted to exotic dynamics; even when the dynamics are affine in the control input, a non-quadratic running cost $L$ in $u$ can result in a Hamiltonian with no closed-form solution.
    
    \textbf{Our Contribution:} We propose an efficient end-to-end approach for training neural networks in high-dimensional optimal control problems where the value function is parameterized but explicit formulas for the Hamiltonian in~\eqref{eq:straight_H} are unavailable. 
    Our key insight is to embed the parameterized value function within an implicit network architecture~\cite{el2021implicit, bai2019deep, heaton2023explainable} that defines the optimal control through the optimality condition itself; this circumvents the need for closed-form solutions (Section~\ref{subsec: problem_formulation}).  To address the computational challenges of differentiating through a time integral of fixed points, we leverage Jacobian-Free Backpropagation (JFB)~\cite{fung2022jfb, bolte2024one, geng2021training} to reduce training complexity while maintaining theoretical guarantees. We extend JFB theory from~\cite{fung2022jfb} to the optimal control setting and show that our approach produces descent directions despite the temporal coupling inherent in trajectory optimization. This framework preserves the advantages of value function parameterization while expanding applicability to problems where Hamiltonians do not have closed-form solutions. All codes are provided in \url{https://github.com/mines-opt-ml/jfb-for-implicit-oc}.

    \section{Related Works}\label{sec:related_works}
    Recent advances in neural networks have enabled efficient solutions to high-dimensional optimal control problems when the Hamiltonian admits closed-form solutions. 
    Approaches such as Neural-PMP~\cite{gu2022pontryagin}, PMP-Net, and Pontryagin Differentiable Programming~\cite{jin2020pontryagin, jin2021safe} leverage the Pontryagin Maximum Principle to create end-to-end differentiable frameworks for learning optimal controllers. Value function parameterization methods~\cite{ruthotto2020machine, li2024neural, onken2022neural, onken2021ot, onken2021neural, Verma-HJB-RL, lin2021alternating} have shown particularly strong results by directly parameterizing the value function with neural networks and recovering the optimal controller through PMP relations. However, all these methods fundamentally rely on the ability to analytically solve the Hamiltonian maximization problem, limiting their applicability when closed-form solutions are unavailable. Building on differentiable optimization foundations, several approaches have addressed computational challenges in learning-based control. DiffMPC~\cite{amos2018differentiable} differentiates through Model Predictive Control using KKT conditions, while IDOC~\cite{xu2023revisiting} achieves linear time complexity through direct matrix equation evaluation. Learned MPC approaches use neural approximations for computational efficiency~\cite{hertneck2018learning}, though they typically abandon optimal control structure. Our work extends value function parameterization methods to implicit Hamiltonians by leveraging implicit neural networks and Jacobian-Free Backpropagation.

    \section{Background}
    \subsection{Optimal Control} 
    \label{sec:OC_back}
    
    To derive a feedback controller, a powerful approach is to utilize the system's value function, $\phi(t,z)$. A fundamental result in optimal control theory establishes that the value function $\phi$ contains complete information about the optimal control. Specifically, the gradient of the value function yields the adjoint variable, see \cite[Theorem I.6.2]{fleming2006controlled}:
    \begin{equation}\label{eq:gradPhi}
    p_{x}(t)=\nabla_{z} \phi \big(t,z_{x}^\star(t) \big).
    \end{equation}
    
    Consequently, we can express the optimal control from \eqref{eq:u_star_def_general} directly as a feedback law in terms of the value function:
    \begin{equation}\label{eq:opt_control_nec}
    \begin{split}
      u^\star(t)=u^\star\left(t,z^\star_{x}(t),\nabla_{z} \phi \big(t,z_{x}^\star(t) \big) \right).
    \end{split}
    \end{equation}

    Herein lies the crucial bottleneck this work addresses. Evaluating the feedback law requires solving the maximization problem \eqref{eq:u_star_def_general}. When this maximization does not admit a closed from solution, which is often the case in practice~\cite{betts2010practical}, computing $u^\star$ becomes non-trivial and and existing approaches~\cite{onken2022neural, onken2021neural, meng2025recent,ruthotto2020machine, lin2021alternating} face computational intractability when high-dimensional controllers are present.
    
    \subsection{Implicit Deep Learning}
    \label{subsec: implicit_deep_learning}
    In our optimal control framework, we use an Implicit Neural Network (INN) architecture to tackle the implicit Hamiltonian challenge. Unlike traditional networks, INN outputs are defined by an implicit (or fixed point) condition~\cite{el2021implicit, fung2024generalization}. For our problem, this operator, $T_\theta$, is derived directly from the Hamiltonian's optimality condition~\eqref{eq:u_star_def_general}, and its fixed point, $u_\theta^\star$, represents the optimal control. That is, the output of a network is the fixed point 
    \begin{equation}
        u_\theta^\star = T_{\theta}(u_\theta^\star; t,z),
        \label{eq:fixed_point}
    \end{equation}
    where $\theta \in \mathbb{R}^p$ refers to the parameters of the network and $(t,z)$ are the input features.
    
     INNs have been applied to domains as diverse as image classification \cite{bai2020multiscale}, inverse problems \cite{gilton2021deep, Yin2022Learning, liu2022online, heaton2021feasibility}, game theory \cite{mckenzie2024three}, maze-solving~\cite{knutson2024logical}, and decision-focused learning~\cite{mckenzie2024differentiating}. 
    Since the output of $u^\star$ in~\eqref{eq:u_star_def_general} is implicitly defined, INNs are naturally suited for modeling $u^\star$, as they are not defined via an explicit sequence of layers, but rather by an implicit, fixed-point, condition. 
    This condition can be viewed as specifying when the problem is considered solved. 
    
    To train INNs, one generally differentiates through the solution of a fixed point operator. 
    A common approach to train implicit networks differentiates through the fixed point equation implicitly~\cite{he2016deep, el2021implicit}. That is, we can differentiate both sides of~\eqref{eq:fixed_point} to obtain 
    \begin{equation*}
        \frac{du_\theta^\star}{d\theta}(t,z) = \frac{\partial T_\theta(u_\theta^\star; t,z)}{\partial u} \frac{d u_\theta^\star}{d\theta}(t,z) + \frac{\partial T_\theta(u_\theta^\star; t,z)}{\partial \theta}.        
    \end{equation*}
    One can then isolate the derivative of interest to obtain 
    \begin{equation}
        \frac{du_\theta^\star}{d\theta}(t,z) = \Bigg( \underbrace{I - \frac{\partial T_\theta(u_\theta^\star; t,z)}{\partial u}}_{= \mathcal{J}_\theta}\Bigg)^{-1} \frac{\partial T_\theta(u_\theta^\star; t,z)}{\partial \theta}.
        \label{eq:implicit_gradient}
    \end{equation}
    A primary challenge when training INNs for feedback controllers is the computational burden of solving a linear system for each evaluation of $(t,z)$, that is, \emph{for each sample path, for each time step, and for every epoch}. This difficulty is exacerbated in our setting, where these computations must also occur \emph{at each time step} during trajectory generation; this is because the controller must be evaluated at each time step during the trajectory generation.
    
    Consequently, recent work has focused on improving training efficiency through various approaches~\cite{bai2019deep, el2021implicit, fung2022jfb, bolte2024one}. Among these, Jacobian-Free Backpropagation (JFB)~\cite{fung2022jfb} stands out perhaps the simplest; this approach shares many similarities with one-step differentiation~\cite{bolte2024one}. \emph{JFB replaces the $\mathcal{J}_\theta$ with an identity matrix, implementing a zeroth-order Neumann expansion of the inverse term}. That is, 
    \begin{equation}
        \frac{du_\theta^\star}{d\theta}(t,z) \approx \frac{\partial T_\theta(u_\theta^\star; t,z)}{\partial \theta}.
        \label{eq: jfb_gradient}
    \end{equation}
    
    This straightforward approach has been proven effective and is simple to implement~\cite[Figure 3]{fung2022jfb}. 
    Additional analysis; however, is required to make JFB provably work in our setting (see Section~\ref{subsec: JFB_analysis}).
    
    \section{High-Dimensional Feedback Controllers with Implicit Hamiltonians}
    We consider optimal control problems where the Hamiltonian is implicit, making direct application of classical feedback control methods challenging. To address this, we follow recent advances in high-dimensional control solvers~\cite{onken2022neural, onken2021neural} and leverage the key relationship from~\eqref{eq:gradPhi} to parameterize the value function directly.
    
    \subsection{Problem Formulation and Implicit Network Parameterization}
    \label{subsec: problem_formulation}
    We formulate the training problem as an expectation over initial conditions:
    \begin{subequations}
    \begin{align}
        \min_\theta \; \mathbb{E}_{x \sim \rho} \; J_x(\theta) &= \int_0^T L(s, z_{x}, u^\star_{\theta}) ds + G(z_{x}(T)), \label{eq:training_problem1}
    \end{align}
        \begin{align}
        \text{subject to:} \quad \dot{z}_x &= f(t,z_x,u_\theta^\star), \quad z_x(0) = x, \label{eq:training_problem2}
        \\
        u_\theta^\star &\in \argmax_u \mathcal{H}(t,z,\nabla \phi_\theta, u), \label{eq:training_problem3}
    \end{align}
    \end{subequations}
    where $\rho \in \mathbb{P}$ is a distribution of initial conditions.
    Note here that once~\eqref{eq:training_problem1}-\eqref{eq:training_problem3} is solved, we have access to the semi-global value function (and hence, a \emph{feedback controller}) since we train on a distribution of initial conditions. 
    
    The central challenge lies in computing (and differentiating through) $u_\theta^\star$ efficiently, since the Hamiltonian maximization problem generally lacks a closed-form solution. Implicit networks provide an elegant solution by leveraging the equivalence between fixed-point and optimization problems: we can define our control network implicitly through the optimality condition itself, which circumvents the need for explicit solutions. 

    Since $u_\theta^\star$ must satisfy the first-order optimality condition $\nabla_u \mathcal{H}(t,z,\nabla \phi_\theta, u_\theta^\star) = 0$, we construct a fixed point operator that naturally converges to this condition via gradient ascent:
    \begin{equation}
    \label{eq:T_theta}
        T_\theta(u; t, z_{x}) = u + \alpha \nabla_u \mathcal{H}(t,z_x,\nabla \phi_\theta, u),
    \end{equation}
    where the fixed point operator $T_\theta$ depends on network parameters $\theta$ through the parameterized value function $\phi_\theta$. This choice ensures that fixed points of $T_\theta$ satisfy the necessary optimality conditions while naturally respecting the underlying optimal control structure. Other fixed point operators satisfying the optimality conditions are possible~\cite{ryu2022large, mckenzie2024three, mckenzie2024differentiating}, but gradient ascent provides the simplest formulation without affecting our core JFB methodology.
    
    A key insight of this approach is leveraging~\eqref{eq:gradPhi} to use the gradient of the value function as the dual variable, eliminating the need to learn the costate/adjoint. This strategy has proven particularly effective for learning feedback controllers via value function methods~\cite{li2024neural, onken2022neural, onken2021ot, vidal2023taming}.

    \subsection{Efficient Training via Jacobian-Free Backpropagation}

    Training the implicit network presents significant computational challenges due to the nested optimization structure inherent in our formulation. At each time step, for every trajectory, and across all training epochs, we must both evaluate and differentiate through the fixed point $u^\star_\theta$. This requirement stems from computing gradients of the objective function $J_x(\theta)$ in~\eqref{eq:training_problem1}-\eqref{eq:training_problem3}, which necessitates backpropagating through entire trajectories where each point depends on solving the implicit optimization problem in~\eqref{eq:training_problem3}.

    The computational burden scales as $\mathcal{O}(N_{\text{batch}} \cdot N_t \cdot m^3)$ \emph{per epoch} after discretization, where $N_{\text{batch}}$ is the batch size, $N_t$ is the number of time steps, and $m$ is the control dimension. The prohibitive $m^3$ term arises from solving the linear system in~\eqref{eq:implicit_gradient} at \emph{each evaluation point} $(t,z)$, making traditional implicit differentiation computationally intractable for realistic problem sizes. 
    To address this bottleneck, we employ Jacobian-Free Backpropagation (JFB) as described in~\eqref{eq: jfb_gradient}, which \emph{eliminates the matrix inversion in~\eqref{eq:implicit_gradient} and reduces the complexity to $\mathcal{O}(N_{\text{batch}} \cdot N_t \cdot m^2)$}, which is crucial when $m$ is large and evaluations occur at every time step of every trajectory as is the case here. 
    This substantial reduction from cubic to quadratic scaling in the control dimension makes training feasible for high-dimensional control problems. 
    We now provide theoretical foundations that justify the use of JFB for solving~\eqref{eq:training_problem1}-\eqref{eq:training_problem3}.

    \subsubsection{Analysis of JFB for Optimal Control with Implicit Hamiltonians}
    \label{subsec: JFB_analysis}

    While JFB provides substantial computational savings, establishing that it produces descent directions for our training objective requires careful analysis due to the unique structure of optimal control problems. Unlike the original JFB framework~\cite{fung2022jfb}, which considers a single fixed-point problem, our formulation involves a continuum of fixed points varying continuously over time through the integral in~\eqref{eq:training_problem1}. Each time point requires solving a distinct instance of~\eqref{eq:implicit_gradient}, and the overall gradient depends on the accumulated effect across the entire trajectory. Consequently, we cannot simply recycle existing JFB results to establish descent properties, and a more comprehensive analysis that accounts for the temporal coupling inherent in optimal control problems is required. We now demonstrate that despite this added complexity, JFB retains its descent properties under some additional smoothness and boundedness assumptions.

    We begin with the two core assumptions from the original work on JFB~\cite{fung2022jfb} that are used to show descent when JFB is applied to a single fixed-point subproblem.
    \begin{assumption}
    There exists $\gamma \in  (0,1)$ such that $T_\theta$ in \eqref{eq:T_theta} is $\gamma$-contractive in $u$ for all $t \in [0,T], z \in \mathbb{R}^n$ and $ \theta \in \mathbb{R}^p$. The operator $T_{\theta}$ is continuously differentiable with respect to $\theta, t, u, z$.
    \label{assumption_1}
    \end{assumption}

    The above contraction assumption of $T_\theta$ can be satisfied for a sufficiently small step size $\alpha > 0$ and a well-behaved $\mathcal{H}$. A standard proof can be found in \cite[Chapter 5]{beck2017first}. 

    \begin{assumption}
For any $\theta, t, u, z$, the matrix $M_\theta = \frac{\partial T_{\theta}}{\partial \theta}(u; t, z)$ satisfies the singular value bounds
\begin{equation*}
\sqrt{\frac{\gamma}{\beta}} \leq \sigma_{\min}(M_\theta) \leq \sigma_{\max}(M_\theta) \leq \frac{1}{\sqrt{\beta}}
\end{equation*}
where $\beta > 0$ and $\gamma > 0$ is the contraction factor of $T_\theta$.
\label{assumption_2}
\end{assumption}
    Note that $M_\theta(\cdot)$ depends on $\theta, t, u, z$ but for brevity, we follow the notation convention of \cite{fung2022jfb}. This assumption ensures that $M_\theta$ has full row rank, is well-conditioned, and allows us to show that $(M_\theta M_\theta^\top)^{-1}$ has uniform bounds on its eigenvalues using the relationship between singular values and eigenvalues of symmetric positive definite matrices.
    \begin{lemma}
    \label{lemma: lambdaA}
    $\left( M_\theta M_\theta^\top \right)^{-1}$ has uniform upper and lower bounds on the eigenvalues for all $t,z,u,\theta$. That is, $\exists$  positive constants $0< \lambda_- < \lambda_+$, such that $\lambda_{-} \: I \preceq \left( M_\theta M_\theta^\top \right)^{-1} \preceq \lambda_+ \: I$, for all $t,z,u,\theta$.
    \end{lemma}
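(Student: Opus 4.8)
The plan is to translate the singular value bounds on $M_\theta$ from Assumption~\ref{assumption_2} into eigenvalue bounds on the symmetric positive definite matrix $M_\theta M_\theta^\top$, and then invert. First I would recall the standard linear-algebra fact that for any matrix $M_\theta$, the nonzero eigenvalues of $M_\theta M_\theta^\top$ are exactly the squares of the nonzero singular values of $M_\theta$; combined with the lower bound $\sigma_{\min}(M_\theta) \geq \sqrt{\gamma/\beta} > 0$ from Assumption~\ref{assumption_2}, this guarantees $M_\theta$ has full row rank, so $M_\theta M_\theta^\top$ is invertible and all of its eigenvalues are strictly positive. Concretely, every eigenvalue $\mu$ of $M_\theta M_\theta^\top$ satisfies
\begin{equation*}
\frac{\gamma}{\beta} \leq \sigma_{\min}(M_\theta)^2 \leq \mu \leq \sigma_{\max}(M_\theta)^2 \leq \frac{1}{\beta}.
\end{equation*}

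Next I would pass to the inverse: since $M_\theta M_\theta^\top$ is symmetric positive definite with eigenvalues in $[\gamma/\beta, \, 1/\beta]$, its inverse $(M_\theta M_\theta^\top)^{-1}$ is also symmetric positive definite with eigenvalues given by the reciprocals, hence lying in $[\beta, \, \beta/\gamma]$. In the Loewner order this reads
\begin{equation*}
\beta \, I \preceq \left( M_\theta M_\theta^\top \right)^{-1} \preceq \frac{\beta}{\gamma} \, I,
\end{equation*}
so the claimed constants are $\lambda_- = \beta$ and $\lambda_+ = \beta/\gamma$, and since $\gamma \in (0,1)$ we indeed have $0 < \lambda_- < \lambda_+$. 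Crucially, the bounds $\beta$ and $\gamma$ in Assumption~\ref{assumption_2} are the \emph{same} constants for every choice of $(t,z,u,\theta)$, so the resulting eigenvalue bounds on $(M_\theta M_\theta^\top)^{-1}$ are uniform over all $t \in [0,T]$, $z \in \mathbb{R}^n$, $u$, and $\theta \in \mathbb{R}^p$, which is exactly the content of the lemma.

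There is no real obstacle here — the lemma is essentially a bookkeeping restatement of Assumption~\ref{assumption_2} in a form convenient for the descent analysis that follows. The only point requiring a word of care is justifying that $M_\theta$ has full row rank (so that $M_\theta M_\theta^\top$ is actually invertible rather than merely positive semidefinite); this follows immediately from the strict positivity of the lower singular value bound $\sqrt{\gamma/\beta}$, which forces all singular values — and hence the smallest one — to be bounded away from zero. I would state the spectral mapping between $\sigma(M_\theta)$ and $\lambda(M_\theta M_\theta^\top)$, apply it, invert, and conclude, all in a few lines.
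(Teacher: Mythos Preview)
Your proposal is correct and follows essentially the same argument as the paper: translate the singular-value bounds from Assumption~\ref{assumption_2} into $\lambda(M_\theta M_\theta^\top) \in [\gamma/\beta,\,1/\beta]$, invert to obtain $\lambda\bigl((M_\theta M_\theta^\top)^{-1}\bigr) \in [\beta,\,\beta/\gamma]$, and set $\lambda_- = \beta$, $\lambda_+ = \beta/\gamma$. If anything, your version is slightly more careful in explicitly noting that the strict positivity of $\sigma_{\min}$ guarantees full row rank and that $\gamma \in (0,1)$ gives the strict ordering $\lambda_- < \lambda_+$.
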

    \begin{proof}
        From Assumption~\ref{assumption_2}, we have $\sigma_{\max}(M_\theta) \leq \frac{1}{\sqrt{\beta}}$, which gives
        $$\lambda_{\max}(M_\theta M_\theta^\top) = \sigma_{\max}^2(M_\theta) \leq \frac{1}{\beta}.$$
        Therefore, $\lambda_{\min}((M_\theta M_\theta^\top)^{-1}) = \frac{1}{\lambda_{\max}(M_\theta M_\theta^\top)} \geq \beta$ for all $(\theta, t, u, z)$.
        
        Similarly, from $\sigma_{\min}(M_\theta) \geq \sqrt{\frac{\gamma}{\beta}}$, we have
        $$\lambda_{\min}(M_\theta M_\theta^\top) = \sigma_{\min}^2(M_\theta) \geq \frac{\gamma}{\beta}.$$
        Therefore, $\lambda_{\max}((M_\theta M_\theta^\top)^{-1}) = \frac{1}{\lambda_{\min}(M_\theta M_\theta^\top)} \leq \frac{\beta}{\gamma}$ for all $(\theta, t, u, z)$.
        
        Hence, for
        \begin{align*}
        \lambda_- &:= \inf_{(\theta, t, u, z)} \lambda_{\min}((M_\theta M_\theta^\top)^{-1}) \geq \beta \\
        \lambda_+ &:= \sup_{(\theta, t, u, z)} \lambda_{\max}((M_\theta M_\theta^\top)^{-1}) \leq \frac{\beta}{\gamma},
        \end{align*}
        we have the result.
        \end{proof}

    Assumptions~\ref{assumption_1}-\ref{assumption_2} alone are insufficient for our setting since JFB is applied continuously throughout trajectory generation when evaluating~\eqref{eq:training_problem1}-\eqref{eq:training_problem3}. To address this, we first establish the mathematical relationship between the true gradient and its JFB approximation.

    The true derivative of $J_x(\theta)$ and its JFB approximation are given by 
    \begin{align}
        \frac{dJ_x(\theta)}{d\theta} = \int_0^T v_\theta(t) dt, \quad \text{ and } \quad 
        \frac{d\tilde{J}_x(\theta)}{d\theta} = \int_0^T w_\theta(t) dt,
        \label{eq:full_derivatives}
    \end{align}
    respectively, where 
    \begin{equation}\label{eq:integrand_definitions}
        \begin{split}
        &v_\theta(t) = \frac{d u_\theta^\star}{d \theta}^{\top}(\nabla_u L(t,z_{x}, u_\theta^\star) + \nabla_u f^{\top}p_{x}), \\
        &w_\theta(t) = \frac{\partial T_\theta}{\partial  \theta}^{\top}(\nabla_u L(t,z_{x}, u_\theta^\star) + \nabla_u f^{\top}p_{x}).
        \end{split}
    \end{equation} 
    Here, $\dfrac{du_\theta^\star}{d\theta}$ is the implicit gradient given by~\eqref{eq:implicit_gradient}, and $p_x$ satisfies a corresponding adjoint equation (see~\cite[Eqn.~(6.11)]{evans1983introduction}). The key computational advantage of the JFB gradient is that \emph{it circumvents the expensive computation of $\frac{du_\theta^\star}{d\theta}$}, resulting in significantly reduced computational cost.
    To establish descent properties in the optimal control setting, we require some additional assumptions that ensure the derivatives of the control problem remain well-behaved.

    \begin{assumption}
    There exists $\eta > 0$ such that for all $t,z,\theta$, $\|\nabla_uL(t,z_x,u_\theta^\star) + \nabla_uf^{\top}p_x\| \geq \eta$. 
    \label{assumption_dhdu}
    \end{assumption}
    Assumption~\ref{assumption_dhdu} requires the gradient of the Hamiltonian with respect to the control be bounded away from zero. This prevents degenerate cases where the control has no meaningful effect on the objective and ensures that parameter updates actually improve performance.

        \newcommand{\lemmaPositiveInnerProduct}[1]{
    Under Assumptions \ref{assumption_1}-\ref{assumption_dhdu}, $ \langle v_\theta(t),w_\theta(t)\rangle \geq \|M_\theta v_\theta\|^2 (\lambda_- - \gamma \lambda_+) , \forall t,z,u,\theta$. Here, $v_\theta, w_\theta$ are defined in \eqref{eq:integrand_definitions}, $\lambda_-, \lambda_+$ are the uniform bounds of $\left( M_\theta M_\theta^\top \right)^{-1}$ as specified in Lemma~\ref{lemma: lambdaA}.}
    \begin{lemma}
    \label{lemma:positive_inner_product}
    \lemmaPositiveInnerProduct{main}
    \end{lemma}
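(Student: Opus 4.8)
The plan is to relate the two integrands $v_\theta(t)$ and $w_\theta(t)$ through the common vector $b_\theta(t) := \nabla_u L(t,z_x,u_\theta^\star) + \nabla_u f^\top p_x$ appearing in \eqref{eq:integrand_definitions}. By definition, $w_\theta = \frac{\partial T_\theta}{\partial\theta}^\top b_\theta = M_\theta^\top b_\theta$, and by the implicit-gradient formula \eqref{eq:implicit_gradient}, $v_\theta = \frac{du_\theta^\star}{d\theta}^\top b_\theta = M_\theta^\top \mathcal{J}_\theta^{-\top} b_\theta$, where $\mathcal{J}_\theta = I - \frac{\partial T_\theta}{\partial u}$. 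So the inner product is
\begin{equation*}
\langle v_\theta, w_\theta\rangle = \langle M_\theta^\top \mathcal{J}_\theta^{-\top} b_\theta,\, M_\theta^\top b_\theta\rangle = b_\theta^\top \mathcal{J}_\theta^{-1} M_\theta M_\theta^\top b_\theta.
\end{equation*}
The goal is to lower-bound this quadratic form. First I would substitute $c_\theta := M_\theta M_\theta^\top b_\theta$, equivalently $b_\theta = (M_\theta M_\theta^\top)^{-1} c_\theta$, so that the expression becomes $c_\theta^\top (M_\theta M_\theta^\top)^{-1} \mathcal{J}_\theta^{-1} c_\theta$ — but actually the cleaner route (mirroring the original JFB argument in \cite{fung2022jfb}) is to write $\mathcal{J}_\theta^{-1} = (I - A)^{-1}$ with $A = \frac{\partial T_\theta}{\partial u}$, expand as a Neumann series $(I-A)^{-1} = I + A(I-A)^{-1}$, and split
\begin{equation*}
\langle v_\theta, w_\theta\rangle = b_\theta^\top M_\theta M_\theta^\top b_\theta + b_\theta^\top A (I-A)^{-1} M_\theta M_\theta^\top b_\theta.
\end{equation*}

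The first term is $\|M_\theta^\top b_\theta\|^2$; but since $\|M_\theta v_\theta\|^2$ is what appears in the claimed bound, I would instead normalize everything through $M_\theta v_\theta$. Note $v_\theta = M_\theta^\top \mathcal{J}_\theta^{-\top} b_\theta$, hence $M_\theta v_\theta = M_\theta M_\theta^\top \mathcal{J}_\theta^{-\top} b_\theta$. Set $r := \mathcal{J}_\theta^{-\top} b_\theta$, so $M_\theta v_\theta = M_\theta M_\theta^\top r$ and $v_\theta = M_\theta^\top r$. Then $b_\theta = \mathcal{J}_\theta^\top r = (I - A^\top) r$, and $w_\theta = M_\theta^\top b_\theta = M_\theta^\top (I - A^\top) r = v_\theta - M_\theta^\top A^\top r$. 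Therefore
\begin{equation*}
\langle v_\theta, w_\theta\rangle = \|v_\theta\|^2 - \langle M_\theta^\top r, M_\theta^\top A^\top r\rangle = \|v_\theta\|^2 - r^\top A M_\theta M_\theta^\top r.
\end{equation*}
Hmm — this is getting closer but the target has $\|M_\theta v_\theta\|^2$, not $\|v_\theta\|^2$. I would reconcile this by writing $\|v_\theta\|^2 = \langle M_\theta^\top r, M_\theta^\top r\rangle = r^\top M_\theta M_\theta^\top r = \langle M_\theta v_\theta, r\rangle$, and then using $r = (M_\theta M_\theta^\top)^{-1}(M_\theta v_\theta)$ together with Lemma~\ref{lemma: lambdaA} to get $r^\top M_\theta M_\theta^\top r = (M_\theta v_\theta)^\top (M_\theta M_\theta^\top)^{-1}(M_\theta v_\theta) \geq \lambda_- \|M_\theta v_\theta\|^2$. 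For the cross term I would bound $|r^\top A M_\theta M_\theta^\top r| = |(M_\theta v_\theta)^\top (M_\theta M_\theta^\top)^{-1} A (M_\theta v_\theta)|$; using the contraction Assumption~\ref{assumption_1}, $\|A\| = \|\frac{\partial T_\theta}{\partial u}\| \leq \gamma < 1$, and the upper eigenvalue bound $\lambda_+$ on $(M_\theta M_\theta^\top)^{-1}$, this gives $|r^\top A M_\theta M_\theta^\top r| \leq \gamma \lambda_+ \|M_\theta v_\theta\|^2$. Combining,
\begin{equation*}
\langle v_\theta, w_\theta\rangle \geq \lambda_- \|M_\theta v_\theta\|^2 - \gamma\lambda_+\|M_\theta v_\theta\|^2 = (\lambda_- - \gamma\lambda_+)\|M_\theta v_\theta\|^2,
\end{equation*}
which is exactly the claim.

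The main obstacle I anticipate is the bookkeeping of transposes and the precise placement of $(M_\theta M_\theta^\top)^{-1}$ so that the Cauchy–Schwarz / operator-norm bounds on the cross term land on $\|M_\theta v_\theta\|^2$ rather than on $\|v_\theta\|^2$ or $\|b_\theta\|^2$; the algebra must be arranged so that exactly one factor of $(M_\theta M_\theta^\top)^{-1}$ sits between two copies of $M_\theta v_\theta$ in each term. A secondary point worth being careful about is justifying $\|\frac{\partial T_\theta}{\partial u}\| \leq \gamma$ from the contraction assumption — this follows because a $\gamma$-contraction in $u$ has Jacobian (in $u$) of operator norm at most $\gamma$ everywhere, which is standard but should be stated. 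I do not expect Assumption~\ref{assumption_dhdu} to be needed for this particular lemma (it guarantees $b_\theta \neq 0$, hence is used downstream to get a strict descent inequality in the subsequent theorem), though I would mention it for consistency with the hypotheses as stated. Everything else is routine linear algebra once the substitution $r = \mathcal{J}_\theta^{-\top} b_\theta$ is in place.
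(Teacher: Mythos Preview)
Your proposal is correct and follows essentially the same route as the paper: both set $\psi := M_\theta v_\theta$, use the substitution $r = \mathcal{J}_\theta^{-\top} b_\theta$ (equivalently $h = \mathcal{J}_\theta^\top (M_\theta M_\theta^\top)^{-1}\psi$ in the paper's notation) to rewrite $\langle v_\theta, w_\theta\rangle$ as the quadratic form $\psi^\top (M_\theta M_\theta^\top)^{-1}\mathcal{J}_\theta\,\psi$, and then bound it below by $(\lambda_- - \gamma\lambda_+)\|\psi\|^2$. The only difference is in that last bound: the paper centers $(M_\theta M_\theta^\top)^{-1}$ at $\bar\lambda = \tfrac12(\lambda_+ + \lambda_-)$ and invokes the coercivity $\langle\psi,\mathcal{J}_\theta^\top\psi\rangle\geq(1-\gamma)\|\psi\|^2$ together with $\|A - \bar\lambda I\| = \tfrac12(\lambda_+ - \lambda_-)$ from \cite{fung2022jfb}, whereas you split $\mathcal{J}_\theta = I - \partial T_\theta/\partial u$ directly and bound the two pieces by $\lambda_-\|\psi\|^2$ and $\gamma\lambda_+\|\psi\|^2$ via Lemma~\ref{lemma: lambdaA} and $\|\partial T_\theta/\partial u\|\leq\gamma$. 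Your split is slightly more elementary and avoids the auxiliary lemmas; the paper's centering trick is the one inherited from the original JFB analysis. Your observation that Assumption~\ref{assumption_dhdu} is not needed for the inequality itself is also accurate---the paper uses it only in its Step~1 to establish $\|\psi\|>0$, i.e., strict positivity of the right-hand side.
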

    
    Lemma~\ref{lemma:positive_inner_product} along with the following assumption allows us to show the main theorem in this work. All proofs are provided in the appendix.
    
    \begin{assumption} Let 
    \begin{equation*}
        C_v = \frac{1}{T} \int_0^T v_\theta(t) dt \quad \text{ and } \quad C_w = \frac{1}{T} \int_0^T w_\theta(t) dt, 
    \end{equation*}
    where $v_\theta, w_\theta$ are defined in~\eqref{eq:integrand_definitions}. We have that for all $z,u,\theta$,
    \begin{enumerate}
        \item $v_\theta, w_\theta$ are entrywise $L^2[0,T]$ with respect to $t$,
        \item $\|v_\theta(t) - C_v\|< \|M_{\theta}v_{\theta}\|\sqrt{\lambda_{-} - \gamma\lambda_{+}} \quad $ and $ \quad \|w_\theta - C_w\| < \|M_{\theta}w_{\theta}\|\sqrt{\lambda_{-} - \gamma\lambda_{+}}$, where $M_\theta$ is defined in Assumption~\ref{assumption_2}, 
$\lambda_-, \lambda_+$ are the uniform bounds of $\left( M_\theta M_\theta^\top \right)^{-1}$ in Lemma~\ref{lemma: lambdaA},
        and $\gamma$ is the contraction factor of $T_\theta$.
    \end{enumerate}
    \label{assumption_timeaverage}
    \end{assumption}
Let $c\coloneqq \frac{ \gamma \eta}{\beta(1+\gamma)}$, where $\beta$, $\gamma$, $\eta$ are the constants from the previous assumptions. A sufficient condition for the second part of Assumption~\ref{assumption_timeaverage} to hold is $\|v_\theta(t) - C_v\|, \|w_\theta - C_w\| < c \sqrt{\lambda_{-} - \gamma\lambda_{+}}$, as detailed in the proof of Lemma~\ref{lemma:positive_inner_product} in the appendix.
    Assumption~\ref{assumption_timeaverage} states that the gradient components must be well-behaved over time, neither blowing up nor oscillating too wildly. 
    Together, these assumptions ensure that the integrand of $\frac{dJ_x}{d\theta}$ remains bounded and that the matrix $M_\theta M_\theta^\top \mathcal{J}_\theta^{-\top}$ maintains sufficient conditioning to prevent numerical instabilities. Under these conditions, we can establish the primary theoretical result of this work. 
    \newcommand{\mainTheorem}[1]{
    Under Assumptions \ref{assumption_1}-\ref{assumption_timeaverage}, the (negative) JFB approximation of the gradient given by 
    \begin{equation*}
        -\frac{d\tilde{J}_x(\theta)}{d\theta} = -\int_0^T w_\theta(t) dt
    \end{equation*} 
    is a descent direction for $J_x(\theta)$ with respect to $\theta$, where $w_\theta$ is defined in~\eqref{eq:integrand_definitions}.
    }
    \begin{theorem}
    \label{theorem:main_theorem}
    \mainTheorem{main}
    \end{theorem}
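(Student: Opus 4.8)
The plan is to show that $\left\langle \frac{dJ_x(\theta)}{d\theta}, \frac{d\tilde J_x(\theta)}{d\theta}\right\rangle > 0$, which by definition makes $-\frac{d\tilde J_x(\theta)}{d\theta}$ a descent direction for $J_x$. Using the representations in \eqref{eq:full_derivatives}, this inner product is $\int_0^T\!\!\int_0^T \langle v_\theta(s), w_\theta(t)\rangle\, ds\, dt$, which I would rewrite in terms of the time-averages $C_v, C_w$ from Assumption~\ref{assumption_timeaverage} as $T^2\langle C_v, C_w\rangle$. So the whole argument reduces to proving $\langle C_v, C_w\rangle > 0$. The subtlety — and the reason we cannot just invoke \cite{fung2022jfb} pointwise — is that Lemma~\ref{lemma:positive_inner_product} only gives $\langle v_\theta(t), w_\theta(t)\rangle \ge \|M_\theta v_\theta\|^2(\lambda_- - \gamma\lambda_+)$ at each fixed $t$; positivity of the pointwise integrand $\langle v_\theta(t), w_\theta(t)\rangle$ does not immediately transfer to positivity of $\langle C_v, C_w\rangle$ because of cross terms when the integrands vary with $t$.

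The key step is therefore a decomposition that isolates the "average" behavior from the "fluctuation." Write $v_\theta(t) = C_v + (v_\theta(t) - C_v)$ and similarly for $w_\theta$, and expand
\begin{equation*}
\langle C_v, C_w\rangle = \frac{1}{T^2}\int_0^T\!\!\int_0^T \langle v_\theta(s), w_\theta(t)\rangle\, ds\, dt.
\end{equation*}
Alternatively — and I think this is cleaner — I would first establish a lower bound on $\langle C_v, C_w\rangle$ directly. Integrating the Lemma~\ref{lemma:positive_inner_product} bound over $t$ gives $\frac{1}{T}\int_0^T \langle v_\theta(t), w_\theta(t)\rangle\, dt \ge (\lambda_- - \gamma\lambda_+)\cdot \frac{1}{T}\int_0^T \|M_\theta v_\theta(t)\|^2\, dt$, which is nonnegative since $\lambda_- - \gamma\lambda_+ > 0$ (this positivity itself should follow from the constants: $\lambda_- \ge \beta$ and $\lambda_+ \le \beta/\gamma$ need $\gamma\lambda_+ < \lambda_-$, which I would note holds under the standing setup — perhaps this needs $\gamma$ small enough and should be checked against Assumption~\ref{assumption_timeaverage} making the bound $\sqrt{\lambda_- - \gamma\lambda_+}$ meaningful). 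Then I relate $\frac{1}{T}\int \langle v_\theta, w_\theta\rangle\,dt$ to $\langle C_v, C_w\rangle$ via the identity
\begin{equation*}
\frac{1}{T}\int_0^T \langle v_\theta(t), w_\theta(t)\rangle\, dt = \langle C_v, C_w\rangle + \frac{1}{T}\int_0^T \langle v_\theta(t) - C_v,\, w_\theta(t) - C_w\rangle\, dt,
\end{equation*}
so $\langle C_v, C_w\rangle = \frac{1}{T}\int_0^T \langle v_\theta, w_\theta\rangle\,dt - \frac{1}{T}\int_0^T \langle v_\theta - C_v, w_\theta - C_w\rangle\,dt$. The first term is bounded below by Lemma~\ref{lemma:positive_inner_product}; the second term is controlled in absolute value by Cauchy–Schwarz and the fluctuation bounds in part~(2) of Assumption~\ref{assumption_timeaverage}, namely $\|v_\theta(t) - C_v\| < \|M_\theta v_\theta\|\sqrt{\lambda_- - \gamma\lambda_+}$ and $\|w_\theta(t) - C_w\| < \|M_\theta w_\theta\|\sqrt{\lambda_- - \gamma\lambda_+}$.

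The main obstacle — and where the computation has to be done carefully — is closing the inequality: I need the guaranteed lower bound from Lemma~\ref{lemma:positive_inner_product} (scaled by $\|M_\theta v_\theta\|^2$) to strictly dominate the fluctuation correction term (bounded by roughly $\|M_\theta v_\theta\|\,\|M_\theta w_\theta\|(\lambda_- - \gamma\lambda_+)$ after using Assumption~\ref{assumption_timeaverage}(2) and Cauchy–Schwarz, possibly with a Cauchy–Schwarz-in-$t$ step on the $L^2[0,T]$ functions guaranteed by Assumption~\ref{assumption_timeaverage}(1)). This is where Assumption~\ref{assumption_dhdu} enters: it gives $\|\nabla_u L + \nabla_u f^\top p_x\| \ge \eta$, which combined with the singular-value bound $\sigma_{\min}(M_\theta) \ge \sqrt{\gamma/\beta}$ yields $\|M_\theta w_\theta\| = \|M_\theta M_\theta^\top(\nabla_u L + \nabla_u f^\top p_x)\| \ge$ a positive multiple of $\eta$, and similarly relates $\|M_\theta v_\theta\|$ to $\|M_\theta w_\theta\|$ through $\mathcal{J}_\theta^{-\top}$; the constant $c = \gamma\eta/(\beta(1+\gamma))$ flagged in the text after Assumption~\ref{assumption_timeaverage} is precisely the quantitative handle that makes the strict-domination inequality go through. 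Once $\langle C_v, C_w\rangle > 0$ is in hand, multiplying by $T^2 > 0$ finishes the proof that $\left\langle \frac{dJ_x}{d\theta}, -\frac{d\tilde J_x}{d\theta}\right\rangle < 0$, i.e., $-\frac{d\tilde J_x}{d\theta}$ is a descent direction.
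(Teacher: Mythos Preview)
Your proposal is correct and follows essentially the same route as the paper: the paper packages the decomposition $\langle C_v, C_w\rangle = \frac{1}{T}\int_0^T \langle v_\theta, w_\theta\rangle\,dt - \frac{1}{T}\int_0^T \langle v_\theta - C_v, w_\theta - C_w\rangle\,dt$ into a separate lemma (Lemma~\ref{lemma:positive_integral_product}), bounds the first term below via Lemma~\ref{lemma:positive_inner_product} and the second above via Cauchy--Schwarz together with Assumption~\ref{assumption_timeaverage}(2), and then the main theorem just invokes these two lemmas. Your extended discussion of how Assumption~\ref{assumption_dhdu} and the constant $c=\gamma\eta/(\beta(1+\gamma))$ enter to make $\|M_\theta v_\theta\|$ strictly positive (and thus close the strict inequality) mirrors exactly the paper's Step~1 in the proof of Lemma~\ref{lemma:positive_inner_product} and the ``sufficient condition'' remark following Assumption~\ref{assumption_timeaverage}.
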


    \section{Experiments}
    
    We evaluate our proposed method on three applications of optimal control problems that give rise to implicit Hamiltonians and test the possible scalability.

    \noindent \textbf{Quadrotor with Exponential Running Cost}. Our first test case involves quadcopter dynamics with affine control dependence in the system dynamics $f$. We choose an exponential running cost $L = \exp(\|u\|^2)$ paired with a quadratic terminal cost $G(z(T)) = \|z(T) - z_{\text{target}}\|^2$ for some final state $z_{\text{target}}$. This exponential cost structure creates a non-linear relationship between control and the Hamiltonian, leading to an implicit Hamiltonian without analytical solutions. 

    \begin{figure}[t]
        \centering
        \textbf{Quadrotor} 
        \\
        \includegraphics[width=1.0\linewidth]{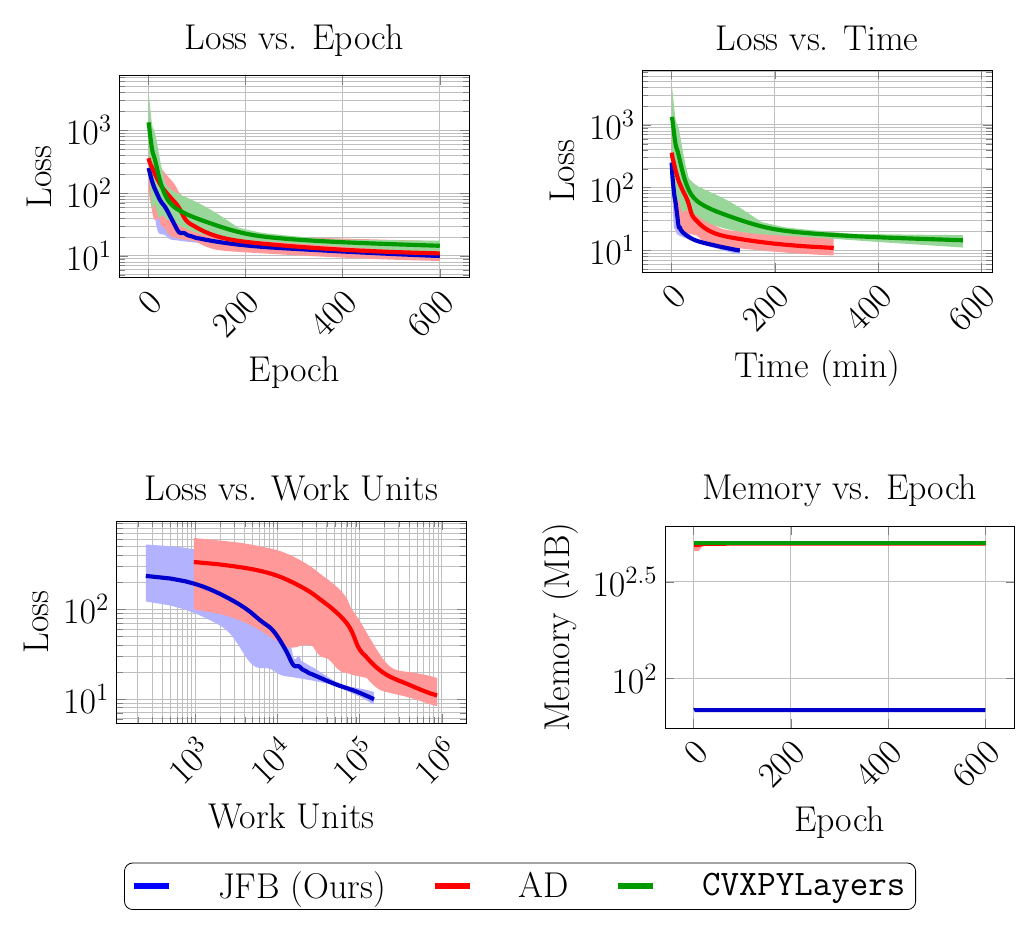}
        \caption{\small{Comparison of JFB, automatic differentiation (AD), and \texttt{CVXPYLayers}~\cite{agrawal2019differentiable} (Implicit Differentiation) for training the value function (and hence, feedback controller) for a quadrotor across three metrics. (Top Left) Loss versus training epochs. (Top Right) Loss plotted against cumulative runtime in minutes. (Bottom Left) Loss plotted against cumulative work units, with one work unit being one evaluation of $\frac{\partial T_{\theta}}{\partial \theta}$, which is equivalent to backpropagation through one application of $T_\theta$. (Bottom Right) Maximum GPU memory usage per training epoch.}}
        \label{fig:single_quadrotor}
    \end{figure}  

    \begin{figure}[t]
        \centering
        \textbf{5 Bicycles} 
        \\
        \includegraphics[width=1.0\linewidth]{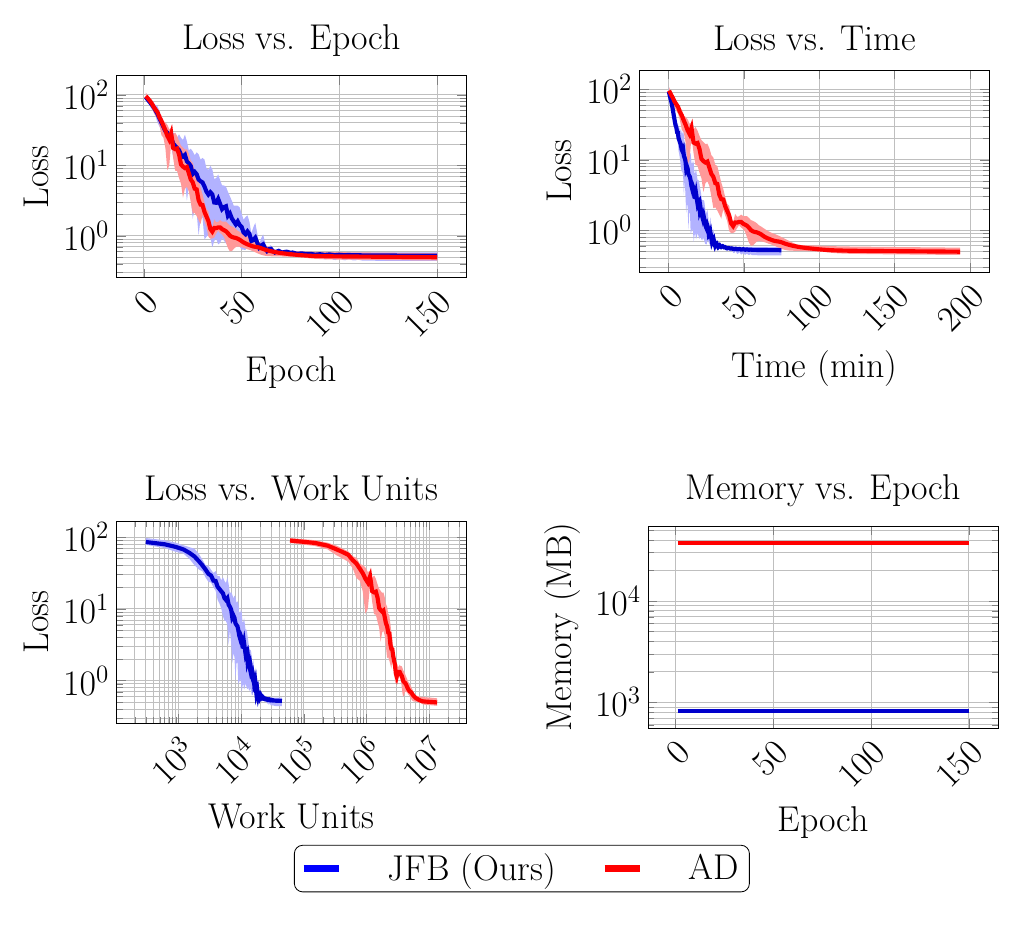}
        \caption{\small{Comparison of JFB and automatic differentiation (AD) for training a feedback for 5 bicycles across four metrics. (Top Left) Loss versus training epochs. (Top Right) Loss plotted against cumulative runtime in minutes. (Bottom Left) Loss plotted against cumulative work units. (Bottom Right) Maximum GPU memory usage per training epoch.}}
        \label{fig:5_bikes}
    \end{figure}  

    \begin{figure}[t]
        \centering
        \textbf{20 Bicycles} 
        \\
        \includegraphics[width=1\linewidth]{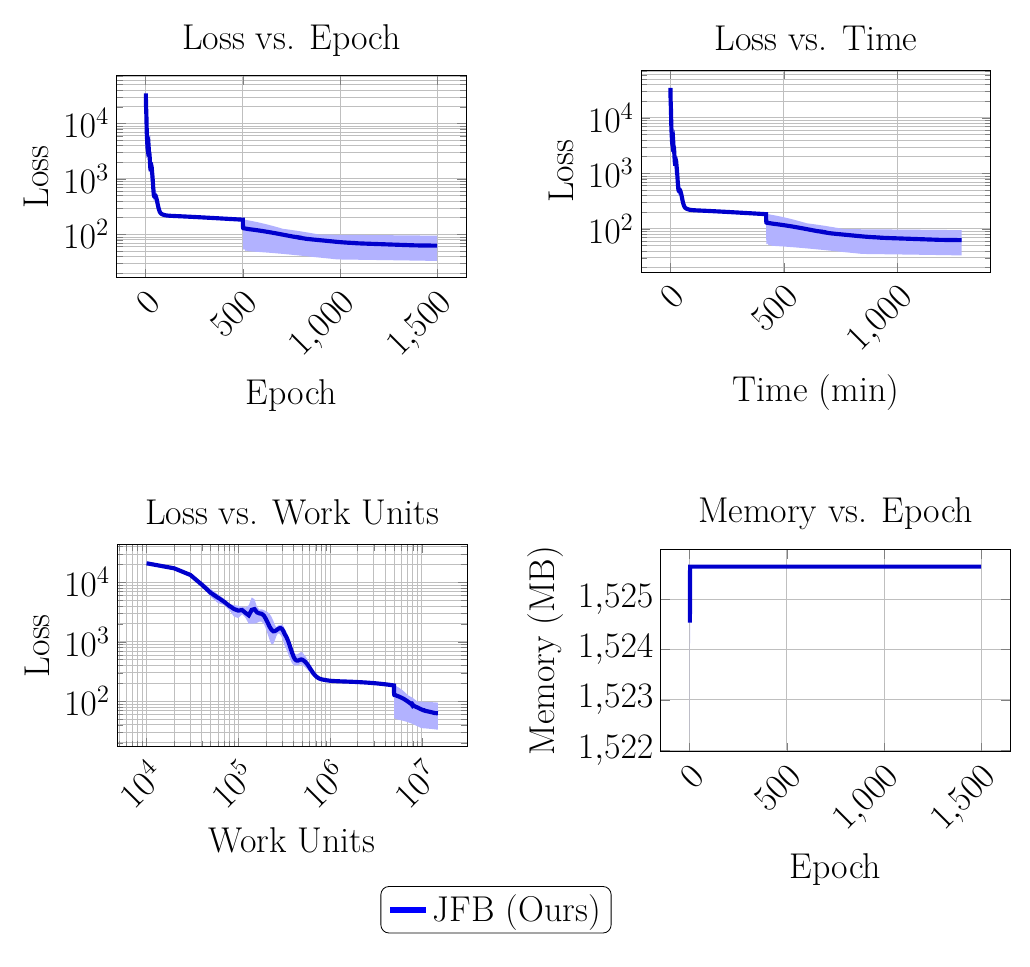}
        \caption{\small{Results for high-dimensional 20-bicycle problem using JFB. (Top Left) Loss vs. training epochs. (Top Right) Loss vs. runtime in minutes. (Bottom Left) Loss vs. cumulative work units,  (Bottom Right) Maximum GPU memory usage per training epoch. AD \emph{cannot} be employed due to high memory requirements of backpropagating through each application of $T_\theta$.}}
        \label{fig:20_bikes_JFB}
    \end{figure}  

    \noindent \textbf{Multi-Bicycle Dynamics (High-Dimensional).} The bicycle dynamics are given by $\dot{x} = v\cos(\psi)$, $\dot{y} = v\sin(\psi)$, $\dot{\psi} = \frac{v}{K}\tan(u_1)$, and $\dot{v} = u_2$, where $(x,y)$ represents position, $\psi$ is heading angle, $v$ is velocity, $K$ is wheelbase, and controls are steering angle $u_1$ and acceleration $u_2$. We employ a quadratic running cost $L(z,u) = \|z - z_{\text{ref}}(t)\|^2$ for trajectory tracking.
    Unlike the quadrotor case, the bicycle dynamics are not affine in the control due to the nonlinear steering relationship $\tan(u_1)$, which creates an implicit Hamiltonian. This nonlinearity prevents closed-form solutions and provides a complementary test case.
    Standard implicit differentiation solvers such as \texttt{CVXPYLayers}~\cite{agrawal2019differentiable} \emph{cannot} be used in this problem due to non-convexity in the bicycle dynamics. We evaluate our method on 5 and 20 bicycles. 
    For 5 bicycles, we have 20-dimensional state and 10-dimensional control. For 20 bicycles, we have 80-dimensional states and 40-dimensional controls.
    

    \subsection{Training Setup}
    For each problem, we compare three gradient computation approaches: the true gradient using \texttt{CVXPYLayers}~\cite{agrawal2019differentiable} (when applicable), our proposed JFB~\cite{fung2022jfb}, and automatic differentiation (AD) through the entire fixed point iteration. We parameterize value functions using fully connected networks with 4 hidden layers of 128 units each and anti derivative of $\tanh$ as activations. The fixed point operator uses step size $\alpha = 0.1$ for the quadcopter and $\alpha = 5.0 \times 10^{-4}$ for 5 bicycles and $10^{-4}$ for 20 bicycles with convergence tolerance $10^{-3}$ for the quadcopter and $10^{-4}$ for the bicycles. 

    For the quadrotor, we train for 600 epochs using a constant learning rate of $10^{-3}$ with batch size 50. For 5 bicycles, we train for 150 epochs using the \texttt{ReduceOnPlateau} learning rate scheduler with initial learning rate of $10^{-2}$ and batch size 300. For the 20-bicycle problem, we train for 1500 epochs using the \texttt{ReduceOnPlateau} learning rate scheduler with initial learning rate of $5 \times 10^{-3}$ and batch size 200. 
    Additionally, we compare the computed loss by each method against the number of work units needed to get that loss value, with one work unit being one evaluation of $\frac{\partial T_{\theta}}{\partial \theta}$, which is equivalent to backpropagation through one application of $T_\theta$. Since \texttt{CVXPYLayers} is a black-box package, it is not possible to compute work units for that method (and we compare with \texttt{CVXPYLayers} only using loss vs. epoch and loss vs. runtime). Each experiment was run 3 times with the solid curve representing the mean and the shaded region covering the minimum and maximum values of loss (or memory) over epoch (or time).

    \subsection{Results}
    For the quadrotor experiment in Fig.~\ref{fig:single_quadrotor}, all methods demonstrate a similar level of accuracy per epoch. However, \texttt{CVXPYLayers} requires significantly more time to converge as it requires solving the a linear system arising from~\eqref{eq:implicit_gradient}. We also show the memory consumption for each method. 
    For the 5-bicycle example in Fig.~\ref{fig:5_bikes}, we are unable to use \texttt{CVXPYLayers} due to non-convex functions in the dynamics. Our results show that JFB and AD perform similarly in terms loss vs. epoch. However, AD takes significantly more time and memory; this is because AD requires backpropagating through each application of $T_\theta$ during the fixed point iteration, which requires significantly more memory~\cite{fung2022jfb}. JFB is also able to achieve a given loss value in multiple orders of magnitude fewer work units than AD. Finally, for the 20 bicycle experiment in Fig.~\ref{fig:20_bikes_JFB}, AD cannot be used due to memory constraints. Worth noting, JFB uses roughly the same memory consumption for both, 5 and 20 bicycles; again, this is because it is based on implicit differentiation which is known to be constant in memory~\cite{fung2022jfb}. Trajectories for an instance of 20 bicycles is shown in Fig.~\ref{fig:20_bikes}. 
    In summary, we observe promising results for JFB as it is the fastest and least memory-consuming method for all experiments.

    \begin{figure}[t]
        \centering
        \textbf{20 Bicycle Trajectories}
        \\
        \includegraphics[width=0.8\linewidth]{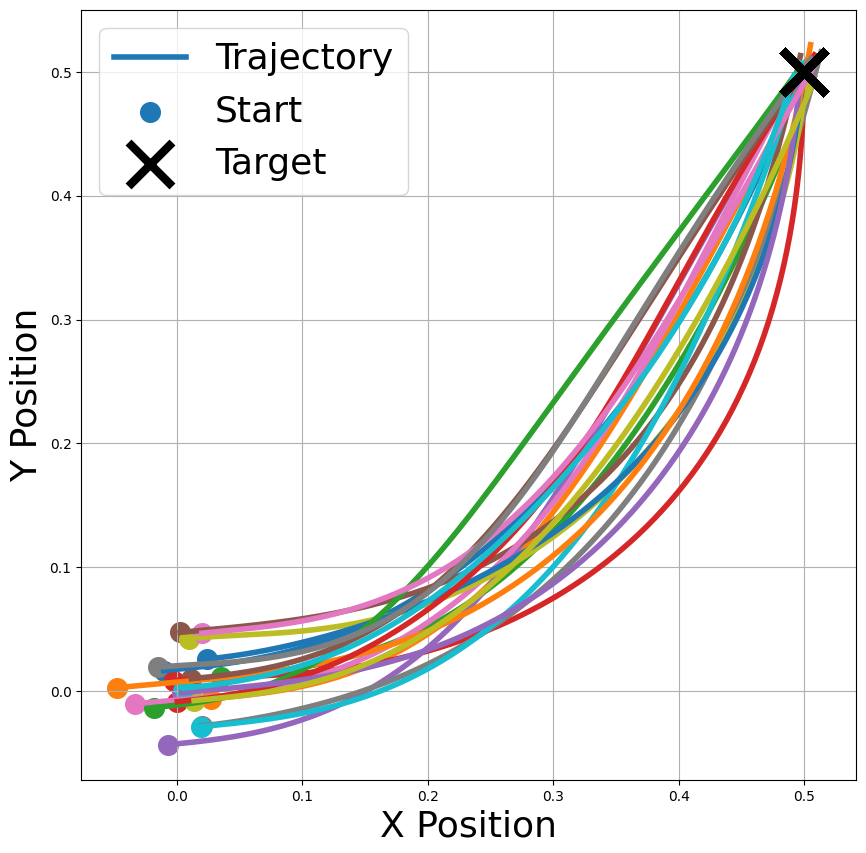}
        \caption{\small{Trajectories for an instance of the 20-bicycle problem. This high-dimensional problem causes memory issues with automatic differentiation (AD), while \texttt{CVXPYLayers} cannot be applied due to non-convex dynamics.}}
        \label{fig:20_bikes}
    \end{figure}
    
    \section{Conclusion}
    We introduce an implicit deep learning approach for learning high-dimensional feedback controllers when the Hamiltonian lacks closed-form solutions. 
    While existing methods based on implicit (or automatic) differentiation can handle such problems in principle, they become computationally prohibitive for high-dimensional multi-agent control scenarios. Our approach leverages Jacobian-Free Backpropagation (JFB) to enable fast and efficient training while preserving the structural advantages of value function parameterization.
    
    We extend the theoretical foundations of JFB to the optimal control setting, showing that it provides descent directions for the control objective despite the temporal coupling inherent in the trajectories. 
    Our experiments demonstrate that JFB matches or exceeds the performance of traditional approaches while offering significant computational advantages. 
    This combination of theoretical guarantees and computational efficiency makes JFB well-suited for learning feedback controllers via value function methods in high-dimensional control problems with implicit Hamiltonians. Future work will extend this framework to mean-field control/games settings~\cite{lasry2007mean, vidal2025kernel, agrawal2022random, lauriere2022learning, wang2025primal, chow2022numerical, yang2023relative}, where the scalability of our approach may prove useful.
\appendix

\section{Appendix}
\label{sec:appendix}
For readability, we restate the statements we prove here. 
Before proving the main theorem, we first show the following Lemmas.


\begin{lemma}
    Let $x \in \mathbb{R}^d$ such that $||x|| \geq c_1$ for some $c_1 > 0$. Let $A \in \mathbb{R}^{d \times d}$ be nonsingular. Then, $\exists\,  c_2 > 0$ such that $||Ax|| \geq c_2$ where $||\cdot||$ denotes the vector 2-norm.
\end{lemma}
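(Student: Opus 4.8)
The plan is to reduce the statement to the elementary fact that a nonsingular matrix cannot shrink a nonzero vector to arbitrarily small norm, which is quantified by either the smallest singular value of $A$ or the operator norm of $A^{-1}$. I would phrase it via $A^{-1}$ since that keeps the argument to one line. Write $x = A^{-1}(Ax)$ and apply submultiplicativity of the induced $2$-norm to get $\|x\| = \|A^{-1}(Ax)\| \le \|A^{-1}\|\,\|Ax\|$. Since $A$ is nonsingular, $A^{-1}$ exists and $\|A^{-1}\| \in (0,\infty)$, so we may divide to obtain $\|Ax\| \ge \|x\|/\|A^{-1}\|$.

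Combining this with the hypothesis $\|x\| \ge c_1$ yields $\|Ax\| \ge c_1/\|A^{-1}\|$. Thus it suffices to set $c_2 := c_1/\|A^{-1}\|$, which is strictly positive because $c_1 > 0$ and $\|A^{-1}\| < \infty$. (Equivalently, one could invoke $\|Ax\| \ge \sigma_{\min}(A)\|x\|$ with $\sigma_{\min}(A) > 0$ by nonsingularity, and take $c_2 := \sigma_{\min}(A)\,c_1$; the two constants coincide since $\sigma_{\min}(A) = 1/\|A^{-1}\|$.)

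There is essentially no obstacle here: the only thing to be careful about is making explicit that nonsingularity of $A$ is exactly what guarantees $\|A^{-1}\|$ is finite (so that $c_2$ is a genuine positive constant and not zero), and that the constant $c_2$ is uniform over all $x$ satisfying $\|x\|\ge c_1$, since it depends only on $A$ and $c_1$. This lemma is the scalar building block that will later be used to transfer the lower bound in Assumption~\ref{assumption_dhdu} on $\nabla_uL + \nabla_uf^{\top}p_x$ through the matrices $M_\theta$ and $\mathcal{J}_\theta^{-1}$ appearing in $v_\theta$ and $w_\theta$.
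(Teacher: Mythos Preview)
Your proof is correct and arrives at the same constant $c_2 = \sigma_{\min}(A)\,c_1$ as the paper. The paper's argument differs only cosmetically: it writes out the SVD $A = U\Sigma V^\top$ explicitly and bounds $\|Ax\| = \|\Sigma V^\top x\| \ge \sigma_d\|x\|$, whereas you obtain the equivalent bound via $\|A^{-1}\|$ and submultiplicativity (and even note the identity $\sigma_{\min}(A) = 1/\|A^{-1}\|$), so the two approaches coincide.
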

\begin{proof}
    Because $A$ is nonsingular, by the Singular Value Decomposition $\exists \text{ orthogonal }U,V \in \mathbb{R}^{d \times d}$ and $\Sigma = \text{diag}(\sigma_1,...,\sigma_d)$ such that $A = U \Sigma V^T$ where the singular values $\sigma_j, 1 \leq j \leq d$ satisfy $\sigma_1 \geq ... \geq \sigma_d > 0$. Let $V_1,...,V_d$ denote the columns of $V$. Then, using the orthogonality of $U,V$
\begin{small}
\begin{align*}
    ||Ax|| =  ||U\Sigma V^Tx|| &= ||\Sigma V^Tx|| 
    = \sqrt{\sum_{j=1}^{d}(\sigma_j\langle V_j,x\rangle})^2  
    \\
    &\geq \sqrt{\sigma_{d}^{2}\sum_{j=1}^{d}(\langle V_j,x\rangle})^2  \geq \sigma_d ||x||.
\end{align*}
\end{small}
If $||x|| \geq c_1 > 0$ then it follows that $||Ax|| \geq c_2 > 0$ where $c_2 = \sigma_dc_1$. 
\end{proof}

\newcommand{\lemmaPositiveIntegralProduct}[1]{ 
Let $v,w: [0,T] \rightarrow \mathbb{R}^p$ satisfy Assumption ~\ref{assumption_timeaverage}. If $\langle v(t), w(t) \rangle \geq \delta^2$ for all $t,z,u,\theta$, then $\left(\int_{0}^{T}v(t)dt\right)^\top \left(\int_{0}^{T}w(t)dt\right) > 0$ for all $z,u,\theta$.}  
\begin{lemma}
\label{lemma:positive_integral_product}
\lemmaPositiveIntegralProduct{main}
\end{lemma}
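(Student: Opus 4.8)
The plan is to reduce the claim to a lower bound on the inner product of the two time averages $C_v=\frac1T\int_0^T v\,dt$ and $C_w=\frac1T\int_0^T w\,dt$ from Assumption~\ref{assumption_timeaverage}. Since $\int_0^T v(t)\,dt = T C_v$ and $\int_0^T w(t)\,dt = T C_w$, we have $\left(\int_0^T v\,dt\right)\T\left(\int_0^T w\,dt\right) = T^2 \inner{C_v}{C_w}$, so it suffices to prove $\inner{C_v}{C_w}>0$. To this end I would split each integrand into its mean plus a fluctuation, $v(t)=C_v+(v(t)-C_v)$ and $w(t)=C_w+(w(t)-C_w)$, expand the bilinear form $\inner{v(t)}{w(t)}$ into four terms, and integrate over $[0,T]$. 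The two mixed terms vanish because $\int_0^T (v(t)-C_v)\,dt=0$ and $\int_0^T (w(t)-C_w)\,dt=0$ by definition of the averages, which leaves the identity
\begin{equation*}
T\inner{C_v}{C_w} = \int_0^T \inner{v(t)}{w(t)}\,dt - \int_0^T \inner{v(t)-C_v}{w(t)-C_w}\,dt.
\end{equation*}

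Next I would bound the two integrals. The first is at least $T\delta^2$ directly from the hypothesis $\inner{v(t)}{w(t)}\geq \delta^2$. For the second, pointwise Cauchy--Schwarz gives $\bigl|\inner{v(t)-C_v}{w(t)-C_w}\bigr|\leq \norm{v(t)-C_v}\,\norm{w(t)-C_w}$, and part~(2) of Assumption~\ref{assumption_timeaverage} --- in the sufficient form stated right after it, $\norm{v(t)-C_v},\,\norm{w(t)-C_w}<c\sqrt{\lambda_--\gamma\lambda_+}$, whose threshold is exactly the $\delta$ of the hypothesis (this identification uses Lemma~\ref{lemma:positive_inner_product}, the preceding lemma on $\norm{Ax}$, the contraction factor $\gamma$ controlling the conditioning of $\mathcal{J}_\theta$, and the singular-value bounds of Assumption~\ref{assumption_2}) --- gives $\norm{v(t)-C_v}\,\norm{w(t)-C_w}<\delta^2$ for every $t\in[0,T]$. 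Since this strict bound holds on all of $[0,T]$ and the integrands are integrable by part~(1) of Assumption~\ref{assumption_timeaverage}, it follows that $\int_0^T \norm{v(t)-C_v}\,\norm{w(t)-C_w}\,dt < T\delta^2$.

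Combining the two estimates, $T\inner{C_v}{C_w} > T\delta^2 - T\delta^2 = 0$, hence $\inner{C_v}{C_w}>0$ and therefore $\left(\int_0^T v\,dt\right)\T\left(\int_0^T w\,dt\right) = T^2\inner{C_v}{C_w} > 0$; all constants used are uniform in $z,u,\theta$, so the conclusion holds for all $z,u,\theta$ as stated. I expect the one delicate point to be the second paragraph --- confirming that the threshold $\delta^2$ in the hypothesis is precisely the one delivered by Assumption~\ref{assumption_timeaverage}, and keeping the fluctuation-integral bound \emph{strict} so that subtracting it from $T\delta^2$ leaves a positive quantity; the remainder is the routine mean/fluctuation decomposition together with Cauchy--Schwarz.
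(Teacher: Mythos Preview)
Your proposal is correct and mirrors the paper's proof essentially step for step: the same mean/fluctuation decomposition $v=C_v+(v-C_v)$, $w=C_w+(w-C_w)$, the same vanishing of the cross terms via $\int_0^T(v-C_v)\,dt=\int_0^T(w-C_w)\,dt=0$, and the same pointwise Cauchy--Schwarz bound on $\inner{v-C_v}{w-C_w}$ combined with Assumption~\ref{assumption_timeaverage} to obtain the strict inequality. Your explicit remark that the ``delicate point'' is matching the $\delta$ of the hypothesis to the threshold in Assumption~\ref{assumption_timeaverage} is in fact a point the paper glosses over, so if anything you are slightly more careful there.
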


\begin{proof} 
Let $I_1 = \int_{0}^{T}v(t)dt$ and $I_2 = \int_{0}^{T}w(t)dt$, so $I_{1}^{\top}I_2 = T^2 C_{v}^{\top}C_{w}$. 

\begin{small}
\begin{align*}
\int_{0}^{T}v^{\top}(t)w(t)dt &= \int_{0}^{T}(v(t) - C_v + C_v)^{\top}(w(t) - C_w + C_w)dt \\
&= \int_{0}^{T}(v(t) - C_v)^{\top}(w(t) - C_w)dt + TC_{v}^{\top}C_{w}
\end{align*}
\end{small}
where we used $\int_0^T (v(t) - C_v) dt = 0 = \int_0^T (w(t) - C_w) dt$. Therefore,
\begin{small}
\begin{equation}
I_{1}^{\top}I_2 = T\left(\int_{0}^{T}v^{\top}(t)w(t)dt - \int_{0}^{T}(v(t) -C_v)^{\top}(w(t)-C_w)dt\right)
\label{eq:inner_product_lemma4}
\end{equation}
\end{small}
By Cauchy-Schwarz and Assumption~\ref{assumption_timeaverage}, 
$\langle v(t) - C_v, w(t) - C_w \rangle \leq ||v-C_v||||w-C_w|| < \delta^2$. 

Since $\langle v(t),w(t) \rangle \geq \delta^2$ by assumption, we have $\langle v(t),w(t) \rangle > \langle v(t) - C_v,w(t) - C_w \rangle$, thus $I_{1}^{\top}I_2 > 0$.
\end{proof}

\noindent \textbf{Proof of Lemma~\ref{lemma:positive_inner_product}.}
\noindent
\textit{\lemmaPositiveInnerProduct{app}}
\textit{Outline of Proof:} The proof is carried out in three main steps.
\begin{itemize}
    \item[] \textbf{Step 1.} Show that $\exists c > 0$ such that $M_{\theta} v_{\theta} \geq c$.
    \item[] \textbf{Step 2.} Reformulate $\langle v_{\theta}, w_{\theta}\rangle$ in terms of $M_{\theta} v_{\theta}$.
    \item[] \textbf{Step 3.} Use this new formulation and the given assumptions to derive the desired inequality. 
\end{itemize}

\begin{proof} 
For $v_\theta = M_{\theta}^\top \Jcal_{\theta}\invT h$ and $w_{\theta} = M_{\theta}\T h$, where $h = \nabla_u L + \nabla_u f\T p$ and $\Jcal_{\theta} = I - \frac{\partial T_{\theta}}{\partial u}$ as defined in \eqref{eq:implicit_gradient}. We are given that the matrix $M_{\theta}$ has full row rank, $\Jcal$ is nonsingular, and $h$ is uniformly bounded below in norm, i.e., $\norm{h} \geq \eta > 0$. We wish to show that $\inner{v_\theta}{w_\theta}$ is bounded below by a positive constant.

\textit{Step 1}: Let $\psi \coloneqq M_{\theta}v_{\theta}$. Substituting the definition of $v_{\theta}$:
\begin{small}
$$\psi = M_{\theta}(M_{\theta}^\top \Jcal_{\theta}\invT h) = (M_\theta M_\theta^\top)\Jcal_{\theta}\invT h.$$
\end{small}
Since $M_{\theta}$ has full row rank, the matrix $M_\theta M_\theta^\top$ is symmetric and positive definite. The matrix $\Jcal\invT$ is invertible and $\|\Jcal_{\theta}\| = \|I - \frac{\partial T_{\theta}}{\partial u}\| \leq 1+\gamma$ by contractivity of $T_\theta$, thus $\sigma_{\min} (\Jcal_{\theta}\invT) = \frac{1}{\sigma_{\max} (\Jcal_{\theta})} \geq \frac{1}{1+\gamma}$.

From Assumption~\ref{assumption_2}, we have $\sigma_{\min}(M_\theta M_\theta^\top) > \frac{\gamma}{\beta}$. Consequently:
\begin{small}
\begin{equation*}
\begin{aligned}
\norm{\psi} = \norm{(M_\theta M_\theta^\top)\Jcal_{\theta}\invT h} & \geq \sigmin(M_\theta M_\theta^\top)\sigmin(\Jcal_{\theta}\invT) \norm{h} \\
& > \frac{\gamma \eta}{\beta(1+\gamma)} >0.
\end{aligned}
\label{eq: psinorm}
\end{equation*}
\end{small}

\textit{Step 2}: Our goal is to express $\inner{v_\theta}{w_\theta}$ in terms of $\psi$. From Step 1, we have $\psi = (M_\theta M_\theta^\top)\Jcal_\theta\invT h$, so:
\begin{small}
$$h = \Jcal_{\theta}\T(M_\theta M_\theta^\top)^{-1}\psi.$$
\end{small}
Substituting this back into the definitions:
\begin{small}
\begin{align*}
v_{\theta} &= M_{\theta}^\top \Jcal_{\theta}\invT[\Jcal_{\theta}\T(M_\theta M_\theta^\top)^{-1}\psi] = M\T(M_\theta M_\theta^\top)^{-1}\psi \\
w_{\theta} &= M\T[\Jcal_{\theta}\T(M_\theta M_\theta^\top)^{-1}\psi] = M\T\Jcal_\theta\T(M_\theta M_\theta^\top)^{-1}\psi
\end{align*}
\end{small}
Therefore, $\inner{v_\theta}{w_\theta} = \inner{\psi}{\Jcal_\theta \T(M_\theta M_\theta^\top)^{-1}\psi}$, using the definition of adjoint.

\textit{Step 3}: Let $A = (M_\theta M_\theta^\top)^{-1}$. Since $M_\theta M_\theta^\top$ is symmetric positive definite, so is $A$. Let $\lambda_{+}$ and $\lambda_{-}$ denote the largest and smallest eigenvalues of $A$, and define $\bar{\lambda} = \frac{1}{2}(\lambda_{+} + \lambda_{-})$.

Using the assumption of this lemma and (Lemma A-1 in \cite{fung2022jfb}) ,  $\Jcal_\theta\T$ is corecive , that is,  $\inner{\psi}{\Jcal_\theta\T\psi} \geq (1-\gamma)\norm{\psi}^2$. Using this and Cauchy-Schwarz,
\begin{small}
\begin{align*}
\inner{\psi}{\Jcal_\theta\T A\psi} &= \inner{\psi}{\Jcal_\theta\T(\bar{\lambda}I + A - \bar{\lambda}I)\psi} \bar{\lambda}\inner{\psi}{\Jcal_\theta\T\psi} 
\\
&+ \inner{\psi}{\Jcal_\theta\T(A - \bar{\lambda}I)\psi}. \\
&\geq \bar{\lambda}(1-\gamma)\norm{\psi}^2 - \norm{\Jcal_\theta\T}\norm{A - \bar{\lambda}I}\norm{\psi}^2 
\end{align*}
\end{small}
Using \cite[Lemmas A-1 and A-2]{fung2022jfb}, we have $\norm{\Jcal_\theta\T} \leq 1+\gamma$ and $\norm{A - \bar{\lambda}I} = \frac{1}{2}(\lambda_{+} - \lambda_{-})$. Substituting these gives,
\begin{small}
\begin{align*}
\inner{v_\theta}{w_\theta} \geq \left[ \frac{\lambda_{+} + \lambda_{-}}{2}(1-\gamma) - (1+\gamma)\frac{\lambda_{+} - \lambda_{-}}{2} \right] \norm{\psi}^2 .
\end{align*}
\end{small}
Since the condition number $\kappa(A) < \frac{1}{\gamma}$ by assumption, we have $(\lambda_{-} - \gamma\lambda_{+}) > 0$. Combined with the lower bound from Step 1, we have
$\inner{v_\theta}{w_\theta} \geq \norm{\psi}^2(\lambda_{-} - \gamma\lambda_{+}) =: \delta^2 > 0.$
\end{proof}
\textbf{Proof of Theorem ~\ref{theorem:main_theorem} (Main Theorem)}
\noindent
\textit{\mainTheorem{app}}
\begin{proof}
The argument follows from the results of Lemmas ~\ref{lemma:positive_inner_product}-~\ref{lemma:positive_integral_product}.

By assumptions~\ref{assumption_1}-\ref{assumption_2}, the operator $T_{\theta}$ satisfies the necessary conditions, and the vector $h(t) = \nabla_uL + \nabla_uf\T p$ is not identically zero. Then, by Lemma~\ref{lemma:positive_inner_product},
$\inner{v_\theta(t)}{w_\theta(t)} \geq \delta^2$ for all $t,z,u,\theta$, 
where $\delta = ||M_{\theta}v_{\theta}||\sqrt{\lambda_{-} - \gamma\lambda_{+}}$ is shown to be bounded away from 0 in the proof of Lemma ~\ref{lemma:positive_inner_product}.
This result, combined with Assumptions ~\ref{assumption_timeaverage}, satisfies all of the hypotheses of Lemma~\ref{lemma:positive_integral_product}, ensuring that the inner product of the time-integrated vectors is also positive for all $z,u,\theta$:\\
$
\inner{\int_{0}^{T}v_\theta(t)dt}{\int_{0}^{T}w_\theta(t)dt} > 0.$
\end{proof}

\section*{ACKNOWLEDGMENT}
SWF and EG were partially funded by NSF Award 2309810. SO was partially funded by DARPA under grant HR00112590074, NSF under grant 2208272 and 1554564, AFOSR under MURI grant N00014-20-1-278, and by ARO under grant W911NF-24-1-015. We also thank Howard Heaton and Levon Nurbekyan for meaningful discussions.



\bibliographystyle{abbrv}
\bibliography{references}

\end{document}